\newtheorem{Thm}{Theorem}[section]
\newtheorem{Lem}[Thm]{Lemma}
\newtheorem{Cor}[Thm]{Corollary}
\theoremstyle{definition}
\newtheorem{Rem}[Thm]{Remark}
\begin{document}

\title[]{Characterizations of the Cauchy distribution associated with integral transforms}

\author[K. Okamura]{Kazuki Okamura}
\address{School of General Education, Shinshu University}
\email{kazukio@shinshu-u.ac.jp}

\subjclass[2000]{60E10, 62E10}
\keywords{characterization of the Cauchy distribution, M\"obius transforms, Mellin transforms}
\date{\today}
\maketitle

\begin{abstract}
We give two new simple characterizations of the Cauchy distribution by using the  M\"obius and Mellin transforms.   
They  also yield characterizations of the circular Cauchy distribution and the mixture Cauchy model. 
\end{abstract}

\section{Introduction} 

The Cauchy distribution is a statistical model with a heavy-tailed symmetric distribution. 
We cannot define its expected value and its variance, and it has no moment generating function, due to its heavy tails. 
It also appears in physics, and is called the Lorentz distribution alternatively. 
Characterizations of probability distributions are interesting in itself and useful when we choose a suitable statistical model. 
Various characterizations of the Cauchy distribution have been considered by many authors \cite{Arnold1979,Arnold1990, Bell1985, Chin2020, Dunau1987, Hamedani1993, Hassenforder1988, Knight1976b, Knight1976a, Letac1977, Menon1962, Menon1966, Norton1983, Obretenov1961, Ramachandran1970, Williams1969, Yanushkevichius2007,Yanushkevichius2014}. 

This paper proposes yet another type of characterizations of the Cauchy distribution. 
Our characterizations concern  integral transforms, specifically, the M\"obius and Mellin transforms.    
The M\"obius and Mellin transforms of the Cauchy distribution have somewhat simpler forms than the characteristic function of it, that is, the Fourier transform of it. 
Our proofs utilize some basic facts of complex analysis and functional analysis. 
Furthermore, our approach immediately yields characterizations of the circular Cauchy distribution and the mixture Cauchy model.

This paper adopts McCullagh's parametrization of the Cauchy distribution \cite{McCullagh1996}. 
Let $i$ be the imaginary unit.
For a complex number $\gamma$, let $\textup{Re}(\gamma)$ and $\textup{Im}(\gamma)$ be the real and imaginary parts of $\gamma$ respectively. 
We denote the distribution with density function 
\[ p(x; \gamma) := \frac{\textup{Im}(\gamma)}{\pi} \frac{1}{|x - \gamma|^2}, \ x \in \mathbb{R},    \]
by $C(\gamma), \ \gamma \in \mathbb{H}$, where we let $\mathbb{H} := \{x+yi : y > 0\}$.

This paper is organized as follows. 
In Section 2, we give a characterization of the Cauchy distribution by the M\"obius transforms with an application to a characterization of the circular Cauchy distribution. 
In Section 3, we give a characterization of the Cauchy distribution by the Mellin transforms with an application to a characterization of the mixture Cauchy model.

\section{Characterization by M\"obius transforms}

Hereafter the symbol $E$ denotes the notation of the expectation of random variables and we denote the complex conjugate of a complex number $\gamma$ by $\overline{\gamma}$. 

\begin{Thm}\label{Mobius}
Let $X$ be a real-valued random variable such that there exists  $\alpha \in \mathbb{H}$ such that  
\[ \alpha = \frac{E\left[\dfrac{X}{X - \overline{\gamma}}\right]}{E\left[\dfrac{1}{X - \overline{\gamma}}\right]}\]
for every $\gamma$ in a subset $D$ of $\mathbb{H}$ having a limit point in $\mathbb{H}$.
Then, 
$X$ follows $C(\alpha)$.  
\end{Thm}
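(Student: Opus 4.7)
The plan is to reduce the given identity to an equation of Cauchy--Stieltjes transforms, extend it from $D$ to all of $\mathbb{H}$ by analytic continuation, and then identify the distribution of $X$ by uniqueness of the transform.

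The first step is purely algebraic. Writing $X/(X-\overline{\gamma}) = 1 + \overline{\gamma}/(X-\overline{\gamma})$, and noting that both expectations are automatically finite because $|X-\overline{\gamma}|\ge\textup{Im}\,\gamma>0$, the hypothesis becomes $(\alpha - \overline{\gamma})\,E[1/(X-\overline{\gamma})] = 1$, i.e., $E[1/(X-\overline{\gamma})] = 1/(\alpha-\overline{\gamma})$ for $\gamma \in D$. Since $X$ is real, $\overline{(X-\overline{\gamma})^{-1}} = (X-\gamma)^{-1}$, so taking complex conjugates yields
\[ E\!\left[\frac{1}{X-\gamma}\right] = \frac{1}{\overline{\alpha}-\gamma}, \qquad \gamma \in D.\]

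Next I would extend this identity to all of $\mathbb{H}$ via the identity theorem. The left-hand side is holomorphic on $\mathbb{H}$ by differentiation under the expectation, justified by the bound $|X-\gamma|^{-1} \leq (\textup{Im}\,\gamma)^{-1}$ which is uniform on compact subsets of $\mathbb{H}$; the right-hand side is holomorphic on $\mathbb{H}$ because $\overline{\alpha}$ lies in the lower half-plane. Since $D$ has a limit point in $\mathbb{H}$, the equality propagates to every $\gamma \in \mathbb{H}$.

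To conclude, a direct residue computation (closing the contour in the lower half-plane around the single pole of $p(\cdot;\alpha)$ at $\overline{\alpha}$) shows that any $Y \sim C(\alpha)$ satisfies $E[1/(Y-\gamma)] = 1/(\overline{\alpha}-\gamma)$ for $\gamma \in \mathbb{H}$. Thus $X$ and $Y$ share the same Cauchy--Stieltjes transform on $\mathbb{H}$, and the Stieltjes inversion formula --- recovering the law of $X$ from the weak limit, as $y\downarrow 0$, of the densities $-\pi^{-1}\textup{Im}\,E[(X-(x+iy))^{-1}]$ --- forces $X \sim C(\alpha)$. I expect the last step, the passage from equality of Cauchy--Stieltjes transforms to equality of laws via Stieltjes inversion, to be the subtlest point to write out carefully; the algebraic reduction and the identity-theorem continuation are routine.
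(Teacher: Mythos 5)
Your proposal is correct and follows essentially the same route as the paper: both reduce the hypothesis to the equality of Cauchy--Stieltjes transforms $E\left[1/(X-\gamma)\right]$ for $\gamma \in D$, extend this to all of $\mathbb{H}$ by the identity theorem, and then identify the law of $X$ by uniqueness of that transform, with the Cauchy case handled by a residue computation. The only differences are presentational: you reach the transform identity by a conjugation trick where the paper compares real and imaginary parts of $E[\phi_\gamma(X)]=\phi_\gamma(\alpha)$, and where you cite the Stieltjes inversion formula, the paper proves that uniqueness step from scratch via a Poisson-kernel approximate-identity lemma combined with Fubini's theorem and the Riesz--Markov--Kakutani theorem.
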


Let $C_0 (\mathbb{R})$ be the set of continuous functions vanishing at infinity. 
The following is standard. 
\begin{Lem}\label{Cc}
Let $f \in C_0 (\mathbb{R})$. 
Then, 
\[ \lim_{b \to +0} \sup_{a \in \mathbb{R}} \left| f(a) - \int_{\mathbb{R}} f(x) \frac{b}{\pi((x-a)^2 + b^2)} dx \right| = 0.   \]
\end{Lem}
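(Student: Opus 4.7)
The plan is to treat $P_b(y) := \frac{b}{\pi(y^2+b^2)}$ as a standard approximate identity on $\mathbb{R}$ --- this is the Poisson kernel for the upper half-plane, and the density of $C(bi)$ in the paper's notation --- and to run the textbook convolution-against-approximate-identity argument, taking care that every bound is uniform in $a$.

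First I would record three elementary facts about $P_b$: $P_b \geq 0$; $\int_{\mathbb{R}} P_b(y)\,dy = 1$ for every $b > 0$ (a direct $\arctan$ computation); and for each fixed $\delta > 0$, $\int_{|y|\geq\delta} P_b(y)\,dy \to 0$ as $b \to 0^+$ (either from the same computation or by rescaling $y = bt$ and dominated convergence). After the substitution $y = x - a$ the quantity inside the absolute value becomes
\[
f(a) - \int_{\mathbb{R}} f(a+y)\,P_b(y)\,dy \;=\; \int_{\mathbb{R}}\bigl(f(a) - f(a+y)\bigr)\,P_b(y)\,dy,
\]
where the first equality uses $\int P_b = 1$.

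Second, I would invoke that every $f \in C_0(\mathbb{R})$ is uniformly continuous on $\mathbb{R}$; this is the one place where the hypothesis enters in an essential way. Given $\varepsilon > 0$, pick $\delta > 0$ so that $|f(a+y) - f(a)| < \varepsilon$ for all $a \in \mathbb{R}$ and all $|y| < \delta$. Splitting the integral at $|y| = \delta$ and using the crude bound $|f(a+y) - f(a)| \leq 2\|f\|_\infty$ on the tail gives
\[
\sup_{a \in \mathbb{R}} \left| f(a) - \int_{\mathbb{R}} f(x)\,\frac{b}{\pi((x-a)^2+b^2)}\,dx \right| \;\leq\; \varepsilon + 2\|f\|_\infty \int_{|y|\geq\delta} P_b(y)\,dy.
\]
Letting $b \to 0^+$ and then $\varepsilon \to 0$ concludes the proof.

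I do not anticipate any real obstacle: the argument is entirely standard and the estimate is manifestly independent of $a$. The only mildly delicate point worth flagging is remembering to use the uniform continuity of $f$ on all of $\mathbb{R}$ (not merely on compact sets), which is exactly where $f \in C_0(\mathbb{R})$, rather than just $f \in C(\mathbb{R})$, is needed.
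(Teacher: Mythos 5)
Your proposal is correct and follows essentially the same route as the paper's own proof: rewrite the difference as an integral against the Poisson kernel using $\int_{\mathbb{R}} P_b = 1$, invoke the uniform continuity of $f \in C_0(\mathbb{R})$, split the integral at $|y| = \delta$, and kill the tail term via $\int_{|y|\ge\delta} P_b(y)\,dy \to 0$ as $b \to 0^+$. The only cosmetic difference is in how that last tail limit is justified (the paper uses monotonicity of $P_b$ in $b$ on $|y|\ge\delta$ plus dominated convergence, while you suggest a direct $\arctan$ computation or rescaling), which is immaterial.
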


\begin{proof}
We have that 
\[  \left|f(a) - \int_{\mathbb R} f(x) \frac{b}{\pi((x-a)^2 + b^2)} dx \right|   \le \int_{\mathbb R}  | f(a+t) - f(a) | \frac{b}{\pi(t^2 + b^2)} dt. \]
Since $f \in C_0 (\mathbb{R})$, $f$ is uniformly continuous on $\mathbb{R}$, that is, 
it holds that 
for every  $\epsilon > 0$, there exists $\delta > 0$ such that for every $t \in [-\delta, \delta]$, 
\[  \sup_{a \in \mathbb{R}}  | f(a+t) - f(a) |  \le \epsilon. \]
By this and the fact that $\displaystyle \int_{\mathbb{R}} \frac{b}{\pi((x-a)^2 + b^2)} dx = 1$, 
we have that 
\[  \int_{\mathbb R} \sup_{a \in \mathbb{R}}  | f(a+t) - f(a) |  \frac{b}{\pi(t^2 + b^2)} dt \le \epsilon + 2 \|f\|_{\infty} \int_{\mathbb{R} \setminus [-\delta, \delta]} \frac{b}{\pi(t^2 + b^2)} dt, \]
where $\| f \|_{\infty}$ denotes the supremum norm of $f$.  
Since $\dfrac{b}{\pi(t^2 + b^2)}$ is increasing as a function of $b \in \mathbb{R} \setminus [-\delta, \delta]$, 
by applying the Lebesgue dominated convergence theorem, 
we have that 
\[ \lim_{b \to +0}  \int_{\mathbb{R} \setminus [-\delta, \delta]} \frac{b}{\pi(t^2 + b^2)} dt = 0.  \] 
\end{proof}

For $\gamma \in \mathbb{H}$, 
we let $\phi_{\gamma} : \mathbb{H} \to \mathbb{D}$ be the function defined by 
\[ \phi_{\gamma}(z) := \frac{z - \gamma}{z - \overline{\gamma}}, \ \ z \in \mathbb{H}, \]
which is a M\"obius transform and could be regarded as a certain generalization of the Cayley transform.  

\begin{proof}[Proof of Theorem \ref{Mobius}] 
We have that for every $\gamma \in D$, 
\begin{equation}\label{Mobius-mean} 
E\left[ \phi_{\gamma}(X)  \right] = \phi_{\gamma}(\alpha). 
\end{equation}
By the residue theorem, 
we see that \eqref{Mobius-mean} holds for $X$ following $C(\alpha)$. 
  
Let $\mu$ be the Borel probability measure on $\mathbb{R}$ induced by $X$.  
Then, 
\begin{equation}\label{eq-compare} 
\int_{\mathbb{R}} \phi_{\gamma}(x) \mu(dx) = \int_{\mathbb{R}} \phi_{\gamma}(x)  \nu(dx),  
\end{equation}
where $\gamma \in D$ and  we let 
\[ \nu(dx) := \frac{\textup{Im}(\alpha)}{|x - \alpha|^2} dx.  \]

Let $$F_{\mu}(a+bi) := \frac{1}{\pi} \int_{\mathbb{R}} \frac{1}{x - (a+bi)} \mu(dx), \ a+bi \in \mathbb{H}.$$
This is holomorphic on $\mathbb{H}$. 
Let $U_{\mu}(a+bi)$ and $V_{\mu}(a+bi)$ be the real and imaginary parts of $F_{\mu}(a+bi)$ respectively. 
By replacing $\mu$ with $\nu$, we define $F_{\nu}(a+bi)$, $U_{\nu}(a+bi)$ and $V_{\nu}(a+bi)$ in the same manner.  

By comparing the real and imaginary parts of \eqref{eq-compare}, 
we have that 
for every $\gamma = a+bi \in D$, 
\[ \int_{\mathbb{R}} \frac{b}{\pi((x-a)^2 + b^2)} \mu(dx) = \int_{\mathbb{R}}  \frac{b}{\pi((x-a)^2 + b^2)} \nu(dx),  \]
and 
\[ \int_{\mathbb{R}} \frac{x-a}{\pi((x-a)^2 + b^2)} \mu(dx) = \int_{\mathbb{R}}  \frac{x-a}{\pi((x-a)^2 + b^2)} \nu(dx). \]
Therefore, 
 $F_{\mu} = F_{\nu}$ on $D$. 
By applying  the identity theorem for holomorphic functions \cite[Theorem 10.18]{Rudin1987},  
$F_{\mu} = F_{\nu}$ on $\mathbb{H}$.

By this and Fubini's theorem,  
\[ \int_{\mathbb{R}} \int_{\mathbb{R}} \frac{b}{\pi((x-a)^2 + b^2)} f(x) dx  \mu(da) = \int_{\mathbb{R}} V_{\mu}(x+bi) f(x) dx \]
\[ = \int_{\mathbb{R}} V_{\nu}(x+bi)  f(x) dx = \int_{\mathbb{R}} \int_{\mathbb{R}} \frac{b}{\pi((x-a)^2 + b^2)} f(x) dx  \nu(da).  \]

By Lemma \ref{Cc}, we have that 
\[ \int_{\mathbb{R}} f(a)  \mu(da)  = \lim_{b \to +0} \int_{\mathbb{R}} \int_{\mathbb{R}} \frac{b}{\pi((x-a)^2 + b^2)} f(x) dx  \mu(da), \]
and 
\[ \int_{\mathbb{R}} f(a)  \nu(da)  = \lim_{b \to +0} \int_{\mathbb{R}} \int_{\mathbb{R}} \frac{b}{\pi((x-a)^2 + b^2)} f(x) dx  \nu(da). \]

Thus we have that 
\[ \int_{\mathbb{R}} f(a)  \mu(da)  = \int_{\mathbb{R}} f(a)  \nu(da).  \]

Since $\mu$ and $\nu$ are both  regular, by the Riesz-Markov-Kakutani theorem \cite[Theorem 6.19]{Rudin1987}, 
we have that $\mu = \nu$, which means that $X$ follows the Cauchy distribution with parameter $\alpha$.  
\end{proof}

\begin{Rem}\label{upper}
Let $\overline{\mathbb{H}}$ be the closure of $\mathbb{H}$, that is, $\overline{\mathbb{H}} := \{x+yi : y \ge 0\}$. 
Let $F_X (\gamma) := E\left[\dfrac{X}{X - \overline{\gamma}}\right] / E\left[\dfrac{1}{X - \overline{\gamma}}\right]$. 
Since 
\[ F_X (\gamma)  - \overline{F_X (\gamma)} = \dfrac{E\left[ \dfrac{(X-a)^2}{(X-a)^2 + b^2} \right]E\left[ \dfrac{1}{(X-a)^2 + b^2}\right] - E\left[ \dfrac{X-a}{(X-a)^2 + b^2}\right]^2}{\left|E\left[\dfrac{1}{X - \gamma}\right]\right|^2}i,\]
where we let $\gamma = a+bi$, 
it holds that for every $\gamma \in \mathbb{H}$, $F_X (\gamma) \in \overline{\mathbb H}$, and furthermore, it holds that 
$F_X (\gamma) \in \mathbb H$ if and only if the distribution of $X$ is not a point mass.  
It holds that 
$\gamma_n$ is the maximal likelihood estimator of Cauchy samples $\{x_1, \cdots, x_n\}$, 
if and only if 
$\gamma_n = F_X (\gamma_n)$ 
where the expectation  is taken with respect to $\displaystyle \frac{1}{n} \sum_{i=1}^{n} \delta_{x_i}$. 
\end{Rem}

The circular Cauchy distribution, also known as the wrapped Cauchy distribution, appears in the area of directional statistics. 
It is a distribution on the unit circle and is connected with the Cauchy distribution via M\"obius transforms.  
Such connection is considered by  \cite{McCullagh1996}. 
 
Let $\mathbb D := \{z \in \mathbb{C} : |z| < 1\}$. 
The circular-Cauchy distribution $P^{\textup{cc}}_w$ with parameter $w \in \mathbb{D}$ is the continuous distribution on $[0, 2\pi)$  
with density function 
\[  \frac{1}{2\pi} \frac{1 - |w|^2}{|\exp(ix) - w|^2}, \ \ x \in [0, 2\pi). \]

We remark that $\phi_{\gamma}$ is a bijection between $\mathbb H$ and $\mathbb D$, and furthermore its inverse is given by 
\[ \phi_{\gamma}^{-1}(w) = \frac{\gamma - \overline{\gamma}w}{1 - w}, \ w \in \mathbb{D}. \]
We can extend the domain of $\phi_{\gamma}$ to $\overline{\mathbb H}$. 
$\phi_{\gamma}$ defines a bijection between $\mathbb{R}$ and $\{z \in \mathbb{C} : |z| =1, z \ne 1\}$. 

If a random variable $X$ follows the circular-Cauchy distribution $P^{\textup{cc}}_{w}$, 
then, $\phi_{\gamma}^{-1}(\exp(iX))$ follows the Cauchy distribution with parameter $\phi_{\gamma}^{-1}(w)$. 
Therefore, by computations with Theorem \ref{Mobius}, 
we have that 
\begin{Cor}
Let $X$ be a $[0, 2\pi)$-valued random variable such that there exists $w \in \mathbb{D}$ such that 
\[ w = \frac{E\left[ \dfrac{\exp(iX)}{ 1 - \eta \exp(iX)} \right]}{E\left[ \dfrac{1}{1 - \eta  \exp(iX)} \right]} \]
for every $\eta$ in a subset $\widetilde D$ of $\mathbb{D}$ having a limit point in $\mathbb{D}$. 
We also assume that $P(X \ne 0) = 1$. 
Then, $X$ follows the circular-Cauchy distribution $P^{\textup{cc}}_{w}$. 
\end{Cor}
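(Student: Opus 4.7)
The plan is to reduce the corollary to Theorem \ref{Mobius} by means of the bijection $\phi_\gamma$ for a fixed $\gamma \in \mathbb{H}$; the particular choice is immaterial (one may take $\gamma = i$). I set $Y := \phi_\gamma^{-1}(\exp(iX))$, so that
\[ Y = \frac{\gamma - \overline{\gamma}\exp(iX)}{1 - \exp(iX)}. \]
The assumption $P(X \ne 0) = 1$ ensures $\exp(iX) \ne 1$ almost surely, so $Y$ is a well-defined real-valued random variable. The goal is to verify the hypothesis of Theorem \ref{Mobius} for $Y$ with constant $\alpha := \phi_\gamma^{-1}(w) \in \mathbb{H}$; once this is done, Theorem \ref{Mobius} yields $Y \sim C(\alpha)$, and the correspondence between the Cauchy and circular Cauchy distributions recalled just before the statement of the corollary (read in reverse, using injectivity of $y \mapsto \arg \phi_\gamma(y)$) gives $X \sim P^{\textup{cc}}_{\phi_\gamma(\alpha)} = P^{\textup{cc}}_w$.

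The essential calculation is a Möbius substitution. Writing $Z := \exp(iX)$, the identity
\[ Y - \overline{\gamma'} = \frac{(\gamma - \overline{\gamma'}) + (\overline{\gamma'} - \overline{\gamma})Z}{1 - Z} \]
shows that $Y - \overline{\gamma'}$ becomes a nonzero scalar multiple of $(1 - \eta Z)/(1 - Z)$ precisely when $\gamma' = \phi_\gamma^{-1}(\overline{\eta})$; this $\gamma'$ lies in $\mathbb{H}$ since $\overline{\eta} \in \mathbb{D}$. Dividing $Y$ and $1$ by $Y - \overline{\gamma'}$ and taking expectations converts the ratio appearing in Theorem \ref{Mobius} into
\[ \frac{E[Y/(Y-\overline{\gamma'})]}{E[1/(Y-\overline{\gamma'})]} = \frac{\gamma\, E[1/(1-\eta Z)] - \overline{\gamma}\, E[Z/(1-\eta Z)]}{E[1/(1-\eta Z)] - E[Z/(1-\eta Z)]}. \]
The hypothesis of the corollary identifies $E[Z/(1-\eta Z)]/E[1/(1-\eta Z)]$ as $w$, so the right-hand side collapses to $(\gamma - \overline{\gamma}w)/(1 - w) = \phi_\gamma^{-1}(w) = \alpha$, independent of $\eta$.

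It remains to transfer the limit-point condition. Let $D := \{\phi_\gamma^{-1}(\overline{\eta}) : \eta \in \widetilde{D}\} \subset \mathbb{H}$. The map $\eta \mapsto \phi_\gamma^{-1}(\overline{\eta})$ is a homeomorphism from $\mathbb{D}$ onto $\mathbb{H}$, so a limit point of $\widetilde{D}$ in $\mathbb{D}$ is carried to a limit point of $D$ in $\mathbb{H}$. Theorem \ref{Mobius} then applies, giving $Y \sim C(\alpha)$, and the conclusion $X \sim P^{\textup{cc}}_w$ follows by the translation explained above.

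The main obstacle is the Möbius bookkeeping in the middle step: identifying the precise correspondence $\gamma' = \phi_\gamma^{-1}(\overline{\eta})$ (in particular the appearance of $\overline{\eta}$ rather than $\eta$) and checking that the linear fractional algebra produces the clean cancellation above. Once this correspondence is pinned down, the rest of the argument is a formal transport of the characterisation through $\phi_\gamma$.
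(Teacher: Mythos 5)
Your proposal is correct and follows essentially the same route as the paper: the paper's proof consists precisely of the correspondence $X \sim P^{\textup{cc}}_w \Leftrightarrow \phi_\gamma^{-1}(\exp(iX)) \sim C(\phi_\gamma^{-1}(w))$ together with ``computations with Theorem \ref{Mobius}'', and your M\"obius substitution (identifying $\gamma' = \phi_\gamma^{-1}(\overline{\eta})$ so that the ratio collapses to $\phi_\gamma^{-1}(w)$) is exactly the computation the paper leaves implicit. Your bookkeeping checks out, including the appearance of $\overline{\eta}$, the nonvanishing of the scalar $c = \gamma - \overline{\gamma'}$, and the transfer of the limit-point condition through the homeomorphism $\eta \mapsto \phi_\gamma^{-1}(\overline{\eta})$.
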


Let $\overline{\mathbb{D}}$ be the closure of $\mathbb{D}$, that is, $\overline{\mathbb{D}} := \{z : |z| \le 1\}$. 
Let 
$G_X (\eta) := E\left[ \dfrac{\exp(iX)}{ 1 - \eta \exp(iX)} \right]/E\left[ \dfrac{1}{1 - \eta  \exp(iX)} \right]$. 
Assume that $P(X \ne 0) = 1$. 
Then, for every $\gamma \in \mathbb{H}$, 
$\phi^{-1}_{\gamma} \left( G_X (\eta)\right) = F_Y (\phi^{-1}_{\gamma} ( \eta ))$ where we let $Y := \phi_{\gamma}^{-1}(\exp(iX))$. 
Then, by Remark \ref{upper}, it holds that for every $\eta \in \mathbb{D}$, $G_X (\eta) \in \overline{\mathbb D}$, and furthermore, 
$G_X (\eta) \in \mathbb D$ if and only if the distribution of $X$ is not a point mass.

\section{Characterization by Mellin transforms}

We define the logarithm for complex numbers as follows. 
For $z = r \exp(i \theta) \in \mathbb H$ where $r > 0$ and $-\pi \le \theta < \pi$, 
we let 
\[ \log z := \log r + i \theta.  \]
This is holomorphic on $U := \mathbb{C} \setminus \left\{z \in \mathbb{C} | \textup{Re}(z) \le 0, \textup{Im}(z) = 0 \right\}$. 
Then, 
\begin{equation}\label{def-log} 
\log x = \log |x| + i \pi \mathbf{1}_{(-\infty, 0)}(x), \ \ x \in \mathbb{R} \setminus \{0\}, 
\end{equation}
where $\mathbf{1}_{(-\infty, 0)}$ denotes the indicator function of $(-\infty,0)$. 
For every $a \in \mathbb{C}$, we let 
\[ z^{a} := \exp(a \log z), \ \ \ z \in U. \]
For every $a \in \mathbb{C}$, we let $0^a := 0$. 
This definition is also adopted for $a = 0$. 
We remark that $x^{a}$ is {\it not} a real number if $x < 0$ and $a \in \mathbb{R} \setminus \mathbb{Z}$. 
For example, $(-8)^{1/3} = -2 \exp(i \pi/3)$. 

In this paper, we call $E[X^a]$ the Mellin transform of the random variable $X$. 
We deal with the powers of {\it negative} numbers by allowing the powers to be {\it complex-valued}.  
In this point, our definition of  the powers of  random variables  is different from the one given in Zolotarev \cite[(3.0.4)]{Zolotarev1986}.

\begin{Thm}\label{power} 
Let $X$ be a real-valued random variable such that $E\left[|X|^{\delta}\right] < \infty$  for some $\delta > 0$. 
If it holds that $E[X^a] = \gamma^a, a \in D$ for  a subset $D \subset (0, \delta)$ having a limit point in $(0, \delta)$ and some $\gamma \in \mathbb{H}$, 
then, $X$ follows  $C(\gamma)$. 
\end{Thm}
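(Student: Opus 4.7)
The approach mirrors that of Theorem \ref{Mobius}: extend the identity holomorphically in $a$ and then invert the resulting integral transform.

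First I would check that $a \mapsto E[X^a]$ is holomorphic on the vertical strip $S := \{a \in \mathbb{C} : 0 < \textup{Re}(a) < \delta\}$. From \eqref{def-log}, on $\{X \neq 0\}$ we have $|X^a| \leq |X|^{\textup{Re}(a)} e^{\pi |\textup{Im}(a)|}$, so on any compact subset of $S$ the integrand is dominated by a constant multiple of $1 + |X|^{\delta}$, which is integrable by hypothesis; Morera's theorem then gives holomorphy. Since $a \mapsto \gamma^a$ is entire and equals $E[X^a]$ on $D$, which has a limit point in $S$, the identity theorem yields $E[X^a] = \gamma^a$ for every $a \in S$ and in particular for every real $a \in (0, \delta)$.

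Writing $\mu$ for the law of $X$ and $\gamma = r e^{i\theta}$ with $\theta \in (0, \pi)$, for real $a \in (0, \delta) \cap (0, 1)$ the identity becomes
\begin{equation*}
\int_{(0,\infty)} x^a \, d\mu(x) + e^{i\pi a} \int_{(-\infty,0)} |x|^a \, d\mu(x) = r^a\bigl(\cos(a\theta) + i \sin(a\theta)\bigr).
\end{equation*}
Comparing imaginary and then real parts, and using $\sin \pi a > 0$, yields the explicit Mellin transforms
\begin{equation*}
\int_{(0,\infty)} x^a \, d\mu(x) = \frac{r^a \sin((\pi - \theta)a)}{\sin(\pi a)}, \qquad \int_{(-\infty,0)} |x|^a \, d\mu(x) = \frac{r^a \sin(\theta a)}{\sin(\pi a)}.
\end{equation*}
A keyhole-contour computation (or repeating the preceding argument for a $C(\gamma)$-random variable) shows that these are precisely the Mellin transforms on the two half-lines of $C(\gamma)$.

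The remaining step is Mellin uniqueness on each half-line. Under the substitution $y = \log x$ the first Mellin transform above becomes the Laplace transform of $\tilde\mu_+ := (\log)_* (\mu|_{(0,\infty)})$; by the holomorphic extension from the first step it is defined on all of $S$, and restricting to a vertical line $\{s_0 + it : t \in \mathbb{R}\}$ with $s_0 \in (0, \delta \wedge 1)$ identifies it with the characteristic function of the exponentially tilted finite measure $e^{s_0 y} \, d\tilde\mu_+(y)$. Uniqueness of characteristic functions then pins down $\mu|_{(0,\infty)}$, and an analogous argument handles $\mu|_{(-\infty,0)}$. Since $C(\gamma)$ is absolutely continuous, the measures $\mu$ and $C(\gamma)$ agree on $\mathbb{R} \setminus \{0\}$ and both have total mass $1$, which forces $\mu(\{0\}) = 0$ and hence $\mu = C(\gamma)$. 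The main obstacle is precisely this reduction of Mellin uniqueness to Fourier uniqueness via holomorphic extension and exponential tilting; the earlier dominated-convergence and identity-theorem arguments are routine.
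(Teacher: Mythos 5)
Your proposal is correct, and its first half coincides with the paper's: both establish holomorphy of $a \mapsto E[X^a]$ on the strip (the paper via an explicit difference-quotient estimate, you via Morera and domination — equivalent in substance), invoke the identity theorem to get $E[X^a]=\gamma^a$ on the whole strip, and then split real and imaginary parts for real $a$ to obtain exactly the two half-line Mellin transforms you wrote down (your $r^a\sin((\pi-\theta)a)/\sin(\pi a)$ is the paper's $r^a(\cos(a\theta)-\sin(a\theta)\cos(a\pi)/\sin(a\pi))$ in disguise). Where you genuinely diverge is the final uniqueness step. The paper packages the two half-line transforms into the Galambos--Simonelli Mellin transform $g(a) = E[X^a,\,X>0]+iE[(-X)^a,\,X<0]$, extends it continuously to the imaginary axis (the boundary of the strip, where $\sin(a\pi)\neq 0$ must be checked and the limit at $a=0$ taken), matches it there with the corresponding function for $C(\gamma)$, and then cites their uniqueness theorem \cite[Theorem 1.19]{galambos2004} as a black box. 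You instead prove the needed uniqueness yourself: the substitution $y=\log x$ turns each half-line Mellin transform, restricted to an \emph{interior} vertical line $\{s_0+it\}$ with $s_0\in(0,\delta\wedge 1)$, into the Fourier transform of the exponentially tilted finite measure $e^{s_0y}\,d\tilde\mu_\pm(y)$, and Fourier uniqueness for finite measures pins down $\mu$ on each half-line; the mass-one argument then kills a possible atom at $0$. Your route buys self-containedness and sidesteps the boundary-extension subtlety entirely, since on an interior line everything is comfortably holomorphic and dominated; the paper's route buys brevity by outsourcing the analytic work to a standard reference. One small point to make explicit if you write this up: to apply the identity theorem to the half-line transforms of $\mu$ and of $C(\gamma)$ simultaneously, work in the common strip $0<\textup{Re}(a)<\delta\wedge 1$, since the Cauchy distribution has finite absolute moments only of order strictly less than $1$ (your restriction of $s_0$ to $(0,\delta\wedge 1)$ already implicitly acknowledges this).
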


Our proof of this assertion depends on Galambos and Simonelli \cite[Theorem 1.19]{galambos2004}. 
However their definition of  the Mellin transform of random variables is somewhat different from ours, so we need some arguments. 

\begin{proof}[Proof of Theorem \ref{power}]

\begin{Lem}\label{F-hol}
Let $J := \left\{x+y i : 0 < x < \delta \right\}$ and $ \overline{J}$ be the closure of $J$.  
Let $f(a) := E[X^a], a \in \overline{J}$. 
Then, $f$ is well-defined and continuous on $\overline{J}$ and holomorphic on $J$. 
\end{Lem}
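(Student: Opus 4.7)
The plan is to derive all three assertions from a single pointwise estimate on $|X^{a}|$, combined with standard dominated-convergence and Morera arguments. For $x \in \mathbb{R} \setminus \{0\}$ and $a = s + ti \in \overline{J}$, equation \eqref{def-log} gives
\[ x^{a} = \exp\bigl((s+ti)(\log|x| + i\pi \mathbf{1}_{(-\infty,0)}(x))\bigr), \]
so that $|x^{a}| = |x|^{s}\exp(-\pi t\,\mathbf{1}_{(-\infty,0)}(x))$. Using $|x|^{s} \le 1 + |x|^{\delta}$ for $s \in [0,\delta]$, together with the convention $0^{a} = 0$, this yields for any $T \ge 0$ and any $a = s + ti \in \overline{J}$ with $|t| \le T$ the almost-sure bound $|X^{a}| \le (1 + |X|^{\delta}) e^{\pi T}$. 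Since $E[|X|^{\delta}] < \infty$, taking $T = |\textup{Im}(a)|$ shows that $f(a) = E[X^{a}]$ is well-defined on $\overline{J}$.

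For continuity on $\overline{J}$, fix $a \in \overline{J}$ and a sequence $a_{n} \to a$. Choose $T$ so that $|\textup{Im}(a)|$ and all $|\textup{Im}(a_{n})|$ are bounded by $T$; the estimate above then produces a common integrable majorant. On $\{X \ne 0\}$, $X^{a_{n}} \to X^{a}$ pointwise by continuity of $(\alpha, z) \mapsto \exp(\alpha \log z)$ interpreted via \eqref{def-log}, so dominated convergence gives $f(a_{n}) \to f(a)$.

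For holomorphy on $J$, the plan is to invoke Morera's theorem. For each fixed $x \ne 0$ the map $a \mapsto x^{a} = \exp(a \log x)$ is entire, and $0^{a} \equiv 0$ is entire as well. Given any closed triangle $\Gamma \subset J$, the pointwise bound above is uniform on the compact set $\Gamma$, so Fubini's theorem yields
\[ \oint_{\Gamma} f(a)\,da = E\left[\oint_{\Gamma} X^{a}\,da\right] = 0, \]
and Morera's theorem together with the continuity already established produces holomorphy of $f$ on $J$. The main piece of bookkeeping is that the majorant $(1 + |X|^{\delta}) e^{\pi T}$ is not uniform in $T$, so no single integrable dominating function works on all of $\overline{J}$; however, continuity and holomorphy are local properties, and restricting to a compact neighborhood of each point supplies the needed majorant.
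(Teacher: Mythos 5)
Your proof is correct, and it diverges from the paper's in an interesting way. For well-definedness and continuity you and the paper do essentially the same thing (dominated convergence from a bound of the form $|X^{a}| \le |X|^{\textup{Re}(a)}\,e^{\pi|\textup{Im}(a)|}$, applied locally in $a$); in fact your pointwise identity $|x^{a}| = |x|^{\textup{Re}(a)}\exp\bigl(-\pi \textup{Im}(a)\,\mathbf{1}_{(-\infty,0)}(x)\bigr)$ is more precise than the paper's, which asserts $|X^{a}| = |X|^{\textup{Re}(a)}$ and overlooks the extra exponential factor contributed by negative values of $X$ when $\textup{Im}(a) \ne 0$ (a harmless slip, since the factor is bounded on sets of bounded imaginary part, which is exactly the localization you spell out at the end). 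The holomorphy step is where the routes genuinely differ: the paper computes the complex derivative directly, showing via dominated convergence that the difference quotient converges to $E[X^{a}\log X,\ X \ne 0]$; this requires the fiddly elementary estimate
\[ \left| X^{a}\left(\frac{X^{h}-1}{h} - \log X\right)\right| \le |h|\,|X|^{\textup{Re}(a)}\left(|\log|X||^{2}+\pi^{2}\right)\exp\bigl(|h|(|\log|X||+\pi)\bigr) \]
together with separate treatments of $\{|X|\ge 1\}$ and $\{|X|\le 1\}$ and the limit $x^{\beta}\log x \to 0$ as $x \to +0$. Your Morera--Fubini argument sidesteps all of that: once you have the uniform bound on a compact triangle and the entirety of $a \mapsto x^{a}$ for each fixed $x$, Cauchy--Goursat kills the inner integral and Morera (using the continuity you already proved) finishes. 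The trade-off is that the paper's heavier computation yields the explicit formula $f'(a) = E[X^{a}\log X,\ X\ne 0]$ as a byproduct, whereas your argument gives holomorphy without identifying the derivative; since the paper never uses that formula later, your route is a legitimate and arguably cleaner alternative.
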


\begin{proof}
Since $|X^a| = |X|^{\textup{Re}(a)}$ and $\textup{Re}(a) \in [0, \delta]$,
we see that $f$ is well-defined and continuous on $\overline{J}$. 

Let $a \in J$ and $h \ne 0$.  
We remark that $E[X^a] = E\left[X^a, \ X \ne 0\right]$. 
Then we have that 
\[ \frac{f(a+h) - f(a)}{h} - E[X^a \log X, X \ne 0] = E\left[X^a \left( \frac{X^h - 1}{h} - \log X \right), \ \  X \ne 0 \right].  \]

If $X \ne 0$, then, 
\[ \left| X^a \left(\frac{X^h - 1}{h} - \log X  \right) \right| \le |X|^{\textup{Re}(a)} \cdot  |h| |\log X|^{2}  \exp(|h \log X|)  \]
\[ \le |h|  |X|^{\textup{Re}(a)}   \left( |\log |X||^{2} + \pi^2 \right)  \exp\left(|h| \left( |\log |X|| + \pi \right) \right).  \]

If $|X| \ge 1$ and $|h| \le (\delta - \textup{Re}(a))/2$, then, 
\[ \left|  X^a \left( \frac{X^h - 1}{h} - \log X \right) \right| \le \delta \exp(\delta \pi) |X|^{\frac{\delta + \textup{Re}(a)}{2}} ((\log |X|)^{2}  + \pi^2). \]

By the assumption, 
\[ E\left[  |X|^{\frac{\delta + \textup{Re}(a)}{2}} \left(1 + (\log |X|)^{2} \right), \ |X| \ge 1  \right] < +\infty. \]

If $|X| \le 1$ and $|h| \le \textup{Re}(a)/2$, then, 
\[ \left|  X^a \left( \frac{X^h - 1}{h} - \log X \right) \right| \le \delta \exp(\delta \pi) |X|^{\frac{\textup{Re}(a)}{2}} ((\log |X|)^{2}  + \pi^2). \]

By the assumption and the fact that $\displaystyle \lim_{x \to +0} x^{\beta} \log x = 0$ for every $\beta > 0$, 
\[ E\left[  |X|^{\frac{\textup{Re}(a)}{2}} \left(1 + (\log |X|)^{2} \right), \ |X| \le 1  \right] < +\infty. \]

By the Lebesgue dominated convergence theorem, 
\[ E\left[X^a \left( \frac{X^h - 1}{h} - \log X \right), \ \  X \ne 0 \right] \to 0, \ h \to 0. \]
\end{proof}

\begin{Lem}\label{F-rep}
\[ f(a) = \gamma^a, \  \ a \in \overline{J}. \]
\end{Lem}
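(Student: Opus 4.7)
The plan is to combine Lemma \ref{F-hol} with the identity theorem for holomorphic functions, and then extend the resulting identity from $J$ to $\overline{J}$ by continuity.

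First, I would observe that the right-hand side $a \mapsto \gamma^a = \exp(a \log \gamma)$ is an entire function of $a \in \mathbb{C}$: since $\gamma \in \mathbb{H}$, the complex number $\log \gamma$ is well-defined under the branch of logarithm chosen in the paper, so $\exp(a \log \gamma)$ is holomorphic on all of $\mathbb{C}$ and a fortiori on the open strip $J = \{x+yi : 0 < x < \delta\}$. By Lemma \ref{F-hol}, $f$ is also holomorphic on $J$. Thus $a \mapsto f(a) - \gamma^a$ is holomorphic on the connected open set $J$.

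Next, the hypothesis of Theorem \ref{power} gives $f(a) = \gamma^a$ for every $a \in D$, where $D \subset (0,\delta)$ has a limit point in $(0,\delta)$. Since $(0,\delta) \subset J$, this limit point belongs to $J$ itself. Applying the identity theorem \cite[Theorem 10.18]{Rudin1987} to $f(a) - \gamma^a$, I conclude that $f(a) = \gamma^a$ holds for every $a \in J$.

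Finally, Lemma \ref{F-hol} also states that $f$ is continuous on $\overline{J}$, and $a \mapsto \gamma^a$ is continuous on $\overline{J}$ as well (being entire). For any $a_0 \in \overline{J} \setminus J$ (i.e.\ $\textup{Re}(a_0) \in \{0, \delta\}$), pick a sequence $a_n \in J$ with $a_n \to a_0$; then $f(a_n) = \gamma^{a_n}$ for all $n$, and letting $n \to \infty$ gives $f(a_0) = \gamma^{a_0}$. This yields the desired equality on $\overline{J}$. There is no real obstacle here: the lemma is essentially a clean application of analytic continuation, and the only mild point to keep in mind is that the limit point of $D$ is required to lie in the \emph{open} strip $J$ for the identity theorem to apply, which is exactly why the hypothesis places the limit point in $(0,\delta)$ rather than at the endpoints.
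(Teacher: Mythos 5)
Your proof is correct and follows essentially the same route as the paper's: holomorphy of $f$ on $J$ from Lemma \ref{F-hol}, holomorphy of $a \mapsto \gamma^a$, the identity theorem applied at a limit point of $D$ inside $J$, and a final extension to $\overline{J}$ by continuity. The extra details you supply (entirety of $\gamma^a$, the sequential boundary argument) are fine but do not change the argument.
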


\begin{proof}  
Let $\widetilde f(a) := \gamma^a, \ a \in \overline{J}$. 
This is holomorphic on $J$. 
By the assumption of Theorem \ref{power}, 
it holds that 
$\widetilde f(a) = f(a), \  a \in D$.  
By Lemma \ref{F-hol}, $f$ is holomorphic on $J$. 
Hence, by the identity theorem for holomorphic functions,  
$\widetilde f(a) = f(a), \  a \in J$. 
Since $f$ and $\widetilde f$ are both continuous on $\overline{J}$, 
we have the assertion. 
\end{proof}

\begin{Lem}\label{g-exp-pre}
Let 
\begin{equation}\label{g-def}
g(a) := E\left[X^a, \ X > 0 \right] + i E\left[(-X)^a, \ X < 0 \right].
\end{equation} 
Then,  $g$ is well-defined and continuous on $\overline{J}$ and holomorphic on $J$. 
Furthermore, 
\begin{equation}\label{g-strip} 
g(a) = r^a \left(\cos(a \theta) - \frac{\sin(a\theta)}{\sin(a\pi)} \cos(a\pi) + i \frac{\sin(a\theta)}{\sin(a\pi)} \right), \ a \in J. 
\end{equation}
\end{Lem}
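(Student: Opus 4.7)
The plan is to determine $g$ by decomposing $f$ from Lemma~\ref{F-rep} into its contributions from $\{X > 0\}$ and $\{X < 0\}$, reading off the two pieces on the real axis, and extending the resulting identity holomorphically.

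First I would verify that each of the two expectations in \eqref{g-def} is well-defined on $\overline{J}$ and holomorphic on $J$. Since $|X^a \mathbf{1}_{X > 0}| \le |X|^{\textup{Re}(a)}$, and similarly for the second term, the dominated-convergence argument of Lemma~\ref{F-hol}, applied with $X$ replaced by $X \mathbf{1}_{X > 0}$ and by $(-X)\mathbf{1}_{X < 0}$, carries over verbatim and produces the regularity half of the conclusion.

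The key algebraic input is the identity $x^a = (-x)^a \exp(i\pi a)$ for $x < 0$, which is immediate from \eqref{def-log}. Writing $A(a) := E[X^a, X > 0]$ and $B(a) := E[(-X)^a, X < 0]$, one obtains the twin identities
\[ f(a) = A(a) + \exp(i\pi a)\, B(a), \qquad g(a) = A(a) + i\, B(a). \]
For real $a$ both $A(a)$ and $B(a)$ are real, so combining the first identity with $f(a) = r^a(\cos(a\theta) + i \sin(a\theta))$ from Lemma~\ref{F-rep} and separating real and imaginary parts gives a $2 \times 2$ linear system that pins down $A(a)$ and $B(a)$ whenever $\sin(a\pi) \ne 0$. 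Substituting the solution into $g(a) = A(a) + i B(a)$ reproduces exactly \eqref{g-strip} on $(0, \delta)$.

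Finally I would promote the identity from the real interval to all of $J$ using the identity theorem for holomorphic functions: the left-hand side is holomorphic on $J$ by the first step, the right-hand side is meromorphic on $J$, and they agree on a real subinterval with a limit point in $J$. The main delicate point is the factor $\sin(a\pi)$ in the denominator; since Cauchy-type integrability forces the relevant strip to satisfy $\textup{Re}(a) < 1$, this factor is nowhere zero on $J$ and both sides are genuinely holomorphic there. I expect the main conceptual obstacle to be nothing more than recognizing that the way to untangle $g = A + iB$ is to invert the companion identity $f = A + \exp(i\pi a)\, B$ against its known value on the real axis.
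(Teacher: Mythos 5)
Your proposal is correct and is essentially the paper's own argument: both decompose $f(a)=A(a)+\exp(i\pi a)B(a)$ using $x^a=(-x)^a\exp(i\pi a)$ for $x<0$, solve for the real-valued $A(a),B(a)$ on the real interval by comparing real and imaginary parts of $f(a)=\gamma^a$ from Lemma \ref{F-rep} (the paper's \eqref{f-re-im}, \eqref{x-nega}, \eqref{x-posi}), substitute into $g=A+iB$, and pass from the real interval to $J$ by holomorphy and the identity theorem. If anything, you are more explicit than the paper on the two points it leaves implicit: the holomorphy of $A$ and $B$ on $J$ (via the dominated-convergence argument of Lemma \ref{F-hol} applied to $X\mathbf{1}_{X>0}$ and $(-X)\mathbf{1}_{X<0}$), and the non-vanishing of $\sin(a\pi)$ on $J$, which indeed holds because the hypotheses force $\delta<1$ (otherwise $1\in\overline{J}$ and Lemma \ref{F-rep} would give $E[X]=f(1)=\gamma\notin\mathbb{R}$, a contradiction).
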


We remark that \eqref{g-def} is equivalent to the definition of  the Mellin transform of $X$ in \cite[Section 1.3]{galambos2004}. 

\begin{proof}
Since $E[|X|^{\delta}] < +\infty$, 
$g$ is well-defined and continuous on $\overline{J}$.  
If $0 < a < \delta$, then, 
\begin{equation}\label{f-re-im} 
f(a) = E\left[X^a, \ X > 0 \right] + E\left[(-X)^a, \ X < 0 \right] \cos(a \pi) +  i E\left[(-X)^a, \ X < 0 \right] \sin(a \pi).  
\end{equation}
If $0 < a < \delta$, then, by Lemma \ref{F-rep}, 
\begin{equation}\label{x-nega} 
E\left[(-X)^a, \ X < 0 \right] = r^a \frac{\sin(a\theta)}{\sin(a\pi)},  
\end{equation}
where we let $\gamma = r \exp(i \theta)$.  
Hence, as a function of $a$, 
$E\left[(-X)^a, \ X < 0 \right]$ is holomorphic on $J$. 
By using  Lemma \ref{F-rep}, we have that 
\[ E\left[X^a, \ X > 0 \right] + E\left[(-X)^a, \ X < 0 \right] \cos(a \pi) = r^a \cos(a\theta), \ 0 < a < \delta. \]

Therefore we have that as a function of $a$, 
$E\left[X^a, \ X > 0 \right]$ is holomorphic on $J$ and
\begin{equation}\label{x-posi} 
E\left[X^a, \ X > 0 \right] = r^a \left(\cos(a\theta) - \frac{\sin(a\theta)}{\sin(a\pi)}\cos(a\pi) \right). 
\end{equation}
By \eqref{f-re-im}, \eqref{x-nega} and \eqref{x-posi}, 
we have \eqref{g-strip}. 
\end{proof}

Now we return to the proof of Theorem \ref{power}. 
Since $\sin(a\pi) \ne 0$ for $a \in \{yi : y \ne 0\}$ and $\displaystyle \lim_{a \to 0} \frac{\sin(a\theta)}{\sin(a\pi)} = \frac{\theta}{\pi}$, 
we could continuously extend the function $g$ in  \eqref{g-strip} to the left boundary of $J$, which is the imaginary axis $\{yi : y \in \mathbb{R}\}$. 

If $X$ follows the Cauchy distribution $C(\gamma)$, then, by the residue theorem, 
\[ E[X^a] = \frac{\textup{Im}(\gamma)}{\pi} \int_{\mathbb{R}}  \frac{x^a}{|x - \gamma|^2} dx = \gamma^a, \ a \in J. \]
Hence if we define $g$ for $X$ following  the Cauchy distribution $C(\gamma)$ in the same manner as in \eqref{g-def}, 
then we have the same expression for $g$ as in \eqref{g-strip}, 
and in particular, they are identical with each other on  the imaginary axis $\{yi : y \in \mathbb{R}\}$. 
Now Theorem \ref{power} follows from \cite[Theorem 1.19]{galambos2004}. 
\end{proof} 

We also have the following claim which is similar to Theorem \ref{power}. 

\begin{Thm}\label{power-2} 
Let $X$ be a real-valued random variable such that $P(X = 0) = 0$ and  $E\left[|X|^{\delta}\right] + E\left[|X|^{-\delta}\right] < \infty$  for some $\delta > 0$. 
If $E[X^a] = \gamma^a, a \in D$ for  a subset $D \subset (-\delta, \delta)$ having a limit point in $ (-\delta, \delta)$ and some $\gamma \in \mathbb{H}$, 
then, $X$ follows $C(\gamma)$. 
\end{Thm}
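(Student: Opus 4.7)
The plan is to reduce Theorem \ref{power-2} to Theorem \ref{power} via holomorphic extension to a wider vertical strip. Set $J' := \{x + yi : -\delta < x < \delta\}$ and define $f(a) := E[X^a]$. Well-definedness on $\overline{J'}$ follows from the elementary bound $|X^a| = |X|^{\textup{Re}(a)} \le |X|^{\delta} + |X|^{-\delta}$, valid for $|\textup{Re}(a)| \le \delta$ on the full-probability event $\{X \ne 0\}$; the hypothesis $E[|X|^{\delta}] + E[|X|^{-\delta}] < \infty$ then yields integrability, and dominated convergence gives continuity of $f$ on $\overline{J'}$.

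Next I would prove that $f$ is holomorphic on $J'$ by adapting Lemma \ref{F-hol}. The estimate for $|X| \ge 1$ is unchanged and controlled by $E[|X|^{\delta}] < \infty$. The genuinely new point is the estimate for $0 < |X| \le 1$: the bound $|X|^{\textup{Re}(a)/2}$ used in Lemma \ref{F-hol} is ineffective when $\textup{Re}(a) < 0$. Instead, for $a \in J'$ and $|h|$ sufficiently small that $|\textup{Re}(a) + h|$ and $|\textup{Re}(a)|$ remain below $\delta$, one dominates the difference-quotient integrand by $C |X|^{-\beta} (|\log |X||^{2} + \pi^2)$ with $\beta := (\delta + |\textup{Re}(a)|)/2 < \delta$. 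On $\{0 < |X| \le 1\}$ this is further dominated by a constant multiple of $|X|^{-\delta}$, hence is integrable by the moment assumption. The Lebesgue dominated convergence theorem then delivers complex differentiability on $J'$.

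With $f$ holomorphic on the connected open set $J'$ and $a \mapsto \gamma^{a}$ entire, the identity theorem applied to $f(a) - \gamma^{a}$ together with the hypothesis that $D \subset (-\delta, \delta) \subset J'$ has a limit point in $(-\delta, \delta)$ yields $f(a) = \gamma^{a}$ for every $a \in J'$. Restricting to the subinterval $(0, \delta) \subset J'$, which obviously has limit points in $(0, \delta)$, I would then invoke Theorem \ref{power} (whose single moment hypothesis $E[|X|^{\delta}] < \infty$ is already part of ours) to conclude that $X$ follows $C(\gamma)$.

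The main obstacle is the holomorphy of $f$ near $X = 0$ for parameters $a$ with $\textup{Re}(a) < 0$; this is precisely where the strengthened hypotheses $P(X = 0) = 0$ and $E[|X|^{-\delta}] < \infty$ enter essentially. Everything else amounts to a transparent extend-then-restrict strategy coupled with an appeal to Theorem \ref{power}.
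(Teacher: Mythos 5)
Your proof is correct, and it takes a cleaner route than the one the paper intends. The paper's entire proof of Theorem \ref{power-2} is the sentence that it ``goes in the same manner as in the proof of Theorem \ref{power}'', i.e., the reader is meant to re-run the whole argument on the two-sided strip: the holomorphy lemma, the identity $f(a)=\gamma^a$, the construction of the auxiliary function $g$ of \eqref{g-def}, its closed form \eqref{g-strip}, the continuous extension to the imaginary axis, and a second appeal to \cite[Theorem 1.19]{galambos2004}. You instead use an extend-then-restrict reduction: establish holomorphy of $f(a)=E[X^a]$ on the wider strip $J'=\{a : |\textup{Re}(a)|<\delta\}$, apply the identity theorem to get $f(a)=\gamma^a$ on all of $J'$, restrict to $(0,\delta)$, and invoke Theorem \ref{power} as a black box --- its hypotheses ($E[|X|^{\delta}]<\infty$, and $E[X^a]=\gamma^a$ on a subset of $(0,\delta)$ with a limit point there) are exactly what your extension delivers. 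The only genuinely new analytic work in your version is the one you correctly isolate: near $X=0$ the bound $|X|^{\textup{Re}(a)/2}$ of Lemma \ref{F-hol} fails once $\textup{Re}(a)\le 0$, and your replacement --- domination of the difference-quotient error by a constant times $|X|^{-\beta}\left(|\log|X||^{2}+\pi^2\right)$ with $\beta=(\delta+|\textup{Re}(a)|)/2<\delta$, hence by a constant times $|X|^{-\delta}$ on $\{0<|X|\le 1\}$ --- is valid for $|h|\le(\delta+\textup{Re}(a))/2$ and uses precisely the hypotheses $P(X=0)=0$ and $E[|X|^{-\delta}]<\infty$. What the two approaches buy: yours avoids duplicating the Mellin-uniqueness machinery (the function $g$ and the Galambos--Simonelli citation), so the incremental proof is shorter and the logical dependence is transparent; the paper's re-run is self-contained in the sense of paralleling the earlier proof line by line, but is strictly more work and leaves the needed modifications to the reader.
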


The proof of Theorem \ref{power-2} goes in the same manner as in the proof of Theorem \ref{power}. 

\begin{Cor}
Let $X$ be a real-valued random variable such that $P(X = 0) = 0$ and  $E\left[|X|^{\delta}\right] + E\left[|X|^{-\delta}\right] < \infty$  for some $\delta > 0$. Then, \\
(i)  If $E\left[ X^{1/k_n}\right]^{k_n} = \gamma \in \mathbb{H}$ for an infinite increasing sequence $(k_n)_n$,   
then, $X$ follows  $C(\gamma)$. \\
(ii) If $0 < P(X < 0) < 1$ and $E[(\log X)^n] = E[\log X]^n$ for every $n \in \mathbb{N}$, 
then, $X$ follows  $C(E[\log X])$. 
\end{Cor}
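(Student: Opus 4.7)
The plan is to deduce both (i) and (ii) from Theorem \ref{power-2} by exhibiting, in each case, a set $D$ with a limit point in $(-\delta,\delta)$ on which $E[X^a]$ equals $\widetilde\gamma^a$ for some $\widetilde\gamma \in \mathbb{H}$.

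For (i), set $z_n := E[X^{1/k_n}]$. Writing $X^{1/k_n} = |X|^{1/k_n}\bigl(\cos(\pi\mathbf{1}_{X<0}/k_n) + i\sin(\pi\mathbf{1}_{X<0}/k_n)\bigr)$ and applying dominated convergence (with domination, for $k_n$ large, by $1 + |X|^\delta + |X|^{-\delta}$), I would establish the first-order asymptotic
\[ z_n \;=\; 1 + \frac{E[\log X]}{k_n} + o(1/k_n), \]
using $E[\log X] = E[\log|X|] + i\pi P(X<0)$. Letting $n \to \infty$ in $z_n^{k_n} = \gamma$ then forces $\gamma = \exp(E[\log X])$; in particular $\operatorname{Im}(E[\log X]) = \arg\gamma \in (0,\pi)$, so $E[\log X]$ is the principal logarithm of $\gamma$. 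Now $z_n$ is one of the $k_n$-th roots $\gamma^{1/k_n} e^{2\pi i j/k_n}$, $j \in \{0,\ldots,k_n-1\}$, which are spaced $\Theta(1/k_n)$ apart on the circle of radius $|\gamma|^{1/k_n}$, while $z_n$ differs from the principal root $\gamma^{1/k_n}$ by $o(1/k_n)$. Hence $z_n = \gamma^{1/k_n}$ for all $n$ large, and Theorem \ref{power-2} applied to $D = \{1/k_n : n \ge N\}$ (which has $0$ as a limit point in $(-\delta,\delta)$) concludes $X \sim C(\gamma)$.

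For (ii), let $\alpha := E[\log X] = E[\log|X|] + i\pi P(X<0)$; the hypothesis $0 < P(X<0) < 1$ gives $\operatorname{Im}(\alpha) \in (0,\pi)$, so $\alpha \in \mathbb{H}$ and $\alpha = \log(e^\alpha)$ in the principal branch. For $|a| < \delta$, the pointwise identity $X^a = \sum_{n \ge 0} (a \log X)^n/n!$ combined with $|\log X| \le |\log|X||+\pi$ gives the domination
\[ e^{|a||\log X|} \;\le\; e^{|a|\pi}\bigl(|X|^{|a|} + |X|^{-|a|}\bigr) \;\le\; e^{\delta\pi}\bigl(2 + |X|^\delta + |X|^{-\delta}\bigr), \]
which is integrable by hypothesis. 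Dominated convergence on partial sums then yields
\[ E[X^a] \;=\; \sum_{n=0}^{\infty} \frac{a^n E[(\log X)^n]}{n!} \;=\; \sum_{n=0}^{\infty} \frac{(a\alpha)^n}{n!} \;=\; e^{a\alpha} \;=\; (e^\alpha)^a, \]
and Theorem \ref{power-2} with $\gamma := e^\alpha$ and $D := (-\delta,\delta)$ gives $X \sim C(\exp E[\log X])$.

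The main technical obstacle is in (i): one must extract enough precision from the $k_n \to \infty$ asymptotics both to identify $\gamma$ itself as $\exp E[\log X]$ and, more delicately, to single out the principal $k_n$-th root of $\gamma$ from its $k_n-1$ competitors; this needs the $o(1/k_n)$ error in $z_n - \gamma^{1/k_n}$ to be sharper than the $\Theta(1/k_n)$ spacing between adjacent roots. Part (ii) is essentially routine once the uniform integrable majorant above is in place.
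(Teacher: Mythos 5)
Your proposal is correct, and for part (ii) it coincides with the paper's own proof: the same integrable majorant $e^{\delta\pi}\bigl(2+|X|^{\delta}+|X|^{-\delta}\bigr)$ for $e^{|a||\log X|}$, the same term-by-term summation giving $E[X^{a}]=\exp(aE[\log X])$, and the same use of $0<P(X<0)<1$ to check $\exp(E[\log X])\in\mathbb{H}$ before invoking the Mellin characterization (the paper cites Theorem \ref{power}, you cite Theorem \ref{power-2}; both apply). The interesting divergence is in part (i), where the paper says only that the assertion ``follows from Theorem \ref{power-2}'' and leaves implicit exactly the issue you isolated: the hypothesis makes $E[X^{1/k_n}]$ \emph{some} $k_n$-th root of $\gamma$, while the theorem needs it to be the \emph{principal} root $\gamma^{1/k_n}$. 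Your resolution via the expansion $E[X^{1/k_n}]=1+E[\log X]/k_n+o(1/k_n)$ --- which simultaneously identifies $\gamma=\exp(E[\log X])$ and beats the $\Theta(1/k_n)$ spacing of the $k_n$-th roots --- is sound, the required domination being (up to a constant factor) the one you state. There is, however, a cheaper, non-asymptotic way to select the root: since $X^{1/k_n}=|X|^{1/k_n}e^{i\pi\mathbf{1}_{\{X<0\}}/k_n}$ almost surely, $X^{1/k_n}$ takes values in the closed convex cone $\{re^{i\phi}:r\ge 0,\ 0\le\phi\le\pi/k_n\}$, hence so does its expectation; among the $k_n$-th roots of $\gamma$, whose arguments are $(\arg\gamma+2\pi j)/k_n$, only the $j=0$ root has argument in $[0,\pi/k_n]$ (the others have argument at least $2\pi/k_n$), so $E[X^{1/k_n}]=\gamma^{1/k_n}$ for \emph{every} $n$, with no limit taken. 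One last point: in (ii) both your argument and the paper's actually produce $X\sim C(\exp(E[\log X]))$; the parameter $C(E[\log X])$ printed in the statement is a typo (if $X\sim C(\gamma)$ then $E[\log X]=\log\gamma$, not $\gamma$), so the assertion you proved --- silently correcting it --- is the right one.
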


We remark that if $0 < P(X < 0) < 1$ and $P(X = 0) = 0$, then, $0 < P(X > 0) < 1$, and furthermore, $X$ is non-atomic. 

\begin{proof}
Assertion (i) follows from Theorem \ref{power-2}. 

(ii) We remark that for $p \in (-\delta, \delta)$, 
\[ E[\exp(|p \log X|)] \le \exp(|p| \pi) E[\exp(|p| |\log |X||)] \le E\left[|X|^{-|p|} + |X|^{|p|}\right] < +\infty. \]
By the Lebesgue convergence theorem and the assumption, we have that for $p \in (-\delta, \delta)$, 
\[ E[X^p] = E\left[ \sum_{n=0}^{\infty} \frac{p^n (\log X)^n}{n!}\right] = \sum_{n=0}^{\infty} \frac{p^n E\left[ (\log X)^n\right]}{n!}  = \exp(p E[\log X]). \]

By \eqref{def-log}, we have that 
\[ \exp(E[\log X]) = \exp(E[\log |X|]) \exp(i \pi P(X < 0)).  \]
By the assumption, we have that $\sin(\pi P(X < 0)) > 0$. 
Hence, it holds that  $\exp(E[\log X]) \in \mathbb{H}$. 
Now apply Theorem \ref{power}. 
\end{proof}

It might be  interesting to consider sufficient conditions for $E[(\log X)^n] = E[\log X]^n$ for every $n \in \mathbb{N}$.  
It is not sufficient that $E[(\log X)^2] = E[\log X]^2$. 
For example, if we consider the expectation with respect to 
$$\mu = \frac{1}{3} \left( \delta_{\{-1\}} + \delta_{\{\exp(\pi/\sqrt{3})\}} + \delta_{\{\exp(-\pi/\sqrt{3})\}}  \right),$$ 
then, we have that $E[(\log X)^2] = E[\log X]^2$.

We can also give a characterization for the mixture Cauchy model. 
If the probability density function is given by 
\[ \frac{1-t}{\pi} \frac{\sigma_1}{(x-\mu_1)^2 + \sigma_1^2} + \frac{t}{\pi} \frac{\sigma_2}{(x-\mu_2)^2 + \sigma_2^2}, \]
for some $0 < t < 1$ and $(\mu_1, \sigma_1) \ne (\mu_2, \sigma_2)$, 
then, we call the model the mixture Cauchy model $C\left(t; \mu_1 + \sigma_1 i;  \mu_2 + \sigma_2 i\right)$. 
See Lehmann \cite[pp.480-481]{Lehmann1999} for mixture models of location-scale families. 

We can show the following in the same manner as in the proof of Theorem \ref{power}. 
\begin{Cor}
Let $X$ be a real-valued random variable such that $E\left[|X|^{\delta}\right] < \infty$  for some $\delta > 0$. 
If $E[X^a] = (1-t) \gamma_1^a + t \gamma_2^a, \  a \in D$ for  a subset $D \subset (0, \delta)$ having a limit point in $(0, \delta)$ and some $t \in (0,1)$ and $\gamma_1, \gamma_2 \in \mathbb{H}$, 
then, $X$ follows the mixture Cauchy model $C(t; \gamma_1; \gamma_2)$. 
\end{Cor}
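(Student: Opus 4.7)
The plan is to adapt the proof of Theorem \ref{power} line by line, exploiting linearity of expectation at every stage. First I would set $f(a) := E[X^a]$ and rerun Lemma \ref{F-hol} verbatim, since the moment bound $E[|X|^\delta] < \infty$ is insensitive to the mixture structure: $f$ is continuous on $\overline{J}$ and holomorphic on $J := \{x+yi : 0 < x < \delta\}$. Because $a \mapsto (1-t)\gamma_1^a + t\gamma_2^a$ is also holomorphic on $J$ and continuous on $\overline{J}$, the identity theorem applied to the difference on $D$, followed by continuity, upgrades the hypothesis to
\begin{equation*}
f(a) = (1-t)\gamma_1^a + t\gamma_2^a, \qquad a \in \overline{J}.
\end{equation*}

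Next I would introduce $g(a) := E[X^a, X>0] + i E[(-X)^a, X<0]$ and redo the real/imaginary decomposition of Lemma \ref{g-exp-pre}. Writing $\gamma_j = r_j \exp(i\theta_j)$ and equating imaginary parts of $f$ on the real slice $(0,\delta)$ gives
\begin{equation*}
E[(-X)^a, X<0] = \frac{(1-t) r_1^a \sin(a\theta_1) + t r_2^a \sin(a\theta_2)}{\sin(a\pi)},
\end{equation*}
together with an analogous formula for $E[X^a, X>0]$. The integrals on the left are holomorphic on $J$ by the Lemma \ref{F-hol} argument, so the identity theorem promotes these equalities to all of $J$ (and forces the apparent poles at integers of $\sin(a\pi)$ to be removable). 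Summing yields
\begin{equation*}
g(a) = (1-t) g_{\gamma_1}(a) + t g_{\gamma_2}(a), \qquad a \in J,
\end{equation*}
where $g_{\gamma_j}$ denotes the function attached to the single Cauchy $C(\gamma_j)$ in Lemma \ref{g-exp-pre}.

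For the final step I would observe that if $Y$ follows the mixture $C(t;\gamma_1;\gamma_2)$, linearity of expectation identifies its Galambos–Simonelli transform with the same convex combination $(1-t) g_{\gamma_1} + t g_{\gamma_2}$, so $g$ and $g_Y$ coincide on $J$. Each summand extends continuously to the imaginary axis since $\sin(a\pi) \neq 0$ for $a = yi$ with $y \neq 0$ and $\lim_{a \to 0} \sin(a\theta_j)/\sin(a\pi) = \theta_j/\pi$, so the common formula extends continuously as well. An application of \cite[Theorem 1.19]{galambos2004} on the imaginary axis then identifies the law of $X$ with that of $Y$.

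The main obstacle I anticipate is the bookkeeping around the zeros of $\sin(a\pi)$: when $\delta > 1$ the strip $J$ contains integers at which the closed-form expressions display apparent poles, and one must invoke the identity theorem twice (once for $f$, once for each split moment) to assert that the integral side, which is genuinely holomorphic on $J$, certifies those singularities as removable. Beyond this point, everything is a direct linearization of the argument for Theorem \ref{power}.
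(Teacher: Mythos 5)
Your proposal is correct and is exactly the argument the paper intends: the paper's entire proof of this corollary is the remark that it follows ``in the same manner as in the proof of Theorem \ref{power}'', and your line-by-line linearization (identity theorem for $f$, the split into $E[X^a,\ X>0]$ and $E[(-X)^a,\ X<0]$ with the convex combination of the closed-form expressions, and the final appeal to \cite[Theorem 1.19]{galambos2004} on the imaginary axis) is precisely that adaptation. Your explicit treatment of the removable singularities of $\sin(a\pi)$ when $\delta > 1$ is a point the paper glosses over even in the proof of Theorem \ref{power} itself, so spelling it out is a welcome addition rather than a deviation.
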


\noindent{\it Acknowledgements} \ The author appreciates the referee for careful reading of the manuscript and giving helpful comments. 
The author was supported by JSPS KAKENHI  19K14549.

\bibliographystyle{amsplain}
\bibliography{GFT-Cauchy}

\end{document}